\newcommand{\GG}{{\mathcal G}}
\newcommand{\DD}{{\mathcal D}}
\newcommand{\HH}{{\mathcal H}}
\newcommand{\oR}{{\mathbb R}}
\newcommand{\oN}{{\mathbb N}}
\newcommand{\EE}{\mathcal E}
\newcommand{\SSS}{{\mathcal S}}
\newcommand{\oZ}{{\mathbb Z}}
\newcommand{\bS}{\mathbf S}
\newcommand{\psd}{{\mathcal S}^n_+}
\newcommand{\PSD}{\mathcal{S}_{+}}
\newcommand{\EDM}{\text{\rm EDM}}
\newcommand{\fedm}{{\text{\rm ed}}}
\newcommand{\gd}{\text{\rm gd}}
\newcommand{\oG}{\overline{G}}
\newcommand{\ran}{{\text{\rm rank}}}
\newcommand{\qb}{\bold{q}}
\newcommand{\pb}{\bold{p}}
\newcommand{\p}{p}
\newcommand{\Om}{\Omega}
\newcommand{\la}{\langle}
\newcommand{\ra}{\rangle}
\newcommand{\what}{\widehat}
\newcommand{\ignore}[1]{}
\begin{document}

\title{The   Gram dimension of a graph}

\author{Monique Laurent\inst{1,2} \and Antonios Varvitsiotis\inst{1}}
\institute{Centrum Wiskunde \& Informatica (CWI), Amsterdam, The Netherlands \\ \email{\{M.Laurent, A.Varvitsiotis\}@cwi.nl} \and Tilburg University, The Netherlands}
\maketitle

\begin{abstract}
The  Gram dimension $\gd(G)$ of a graph is the smallest integer $k \ge 1$ such that, for every assignment of unit vectors to the nodes of the graph, there exists another assignment of unit vectors lying in $\oR^k$,
having the same inner products on the edges of the graph.
The class of graphs satisfying $\gd(G) \le k$ is minor closed for fixed $k$, so it can characterized by a finite list of forbidden minors.
For $k\le 3$, the only forbidden minor is $K_{k+1}$. We show that a graph has Gram dimension at most 4 if and only if it does not have $K_5$ and $K_{2,2,2}$ as minors. We also show some close connections to the notion of $d$-realizability of graphs.
In particular, our result implies the characterization of 3-realizable graphs of Belk and Connelly \cite{Belk,BC}.
\end{abstract}

\section{Introduction}

The problem of completing a given partial matrix (where only a subset of entries are specified) to a full positive semidefinite (psd) matrix is one of the most extensively studied  matrix completion problems.
A particular instance is   the completion problem for correlation matrices arising in probability and statistics, and it is also closely related to the completion problem for 
Euclidean distance matrices  with applications, e.g.,  to sensor network localization and  molecular conformation in chemistry.
We refer, e.g.,  to \cite{DL97,L01} and further references therein for additional
 details.  

An important feature of a matrix is its rank which intuitively can be seen as a 
 measure of complexity of the data it represents. 
As an  example, the minimum embedding dimension of a finite metric space can be expressed as the rank of an appropriate matrix~\cite{DL97}. Another problem of interest is to compute  low rank solutions to semidefinite programs as they may lead to  improved approximations to the underlying discrete optimization problem~\cite{AZ05}.
Consequently,   the problem of 
computing (approximate) matrix completions is  of fundamental importance in many disciplines 
and it has  been extensively studied (see, e.g.,  \cite{CR08,RFP}).

This motivates the following question which we study in this paper:
Given a partially specified matrix  which admits at least one psd completion,
 provide guarantees for the existence of small rank psd completions. 

Evidently, the (non)existence of small rank completions depends on the values of the prescribed entries of the partial  matrix.  We  approach this problem from a combinatorial point of view and  give an  answer in terms of the combinatorial structure of the problem, which is captured by the {\em Gram dimension} of the  graph. 
 Before we give the precise definition,  we  introduce  some notation.

\medskip
Throughout $\SSS^n$ denotes the set of symmetric $n\times n$ matrices and  $\SSS^n_+$ (resp., $\SSS^n_{++}$)
is the subset of all positive semidefinite (resp., positive definite) matrices.
For a matrix $X$ the notation $X\succeq 0$ means that  $X$ is psd.
Given  a graph  $G=(V=[n],E)$, its edges are denoted as (unordered) pairs $(i,j)$ and, for convenience, we will sometimes 
 identify $V$ with the set of all diagonal pairs, i.e., we set     $V=\{(i,i)\mid i\in [n]\}$.
Moreover, $\pi_{V E}$ denotes the projection from $\SSS^n$ onto the subspace $\oR^{V\cup E}$  indexed by the diagonal entries and the edges of $G$.

\begin{definition}\label{gramdef}
The Gram dimension $\gd(G)$ of a graph $G=([n],E)$ is  the smallest integer $k\ge 1$ such that,
for any matrix $X\in \SSS^n_+$,
there exists  another matrix $X' \in \SSS^n_+$ with rank at most $k$ and such that
$\pi_{VE}(X)=\pi_{VE}(X^{'})$.
\end{definition}
Given a graph $G=([n],E)$, a partial $G$-matrix  is  a partial $n \times n $ matrix whose entries are specified on the diagonal and at positions corresponding to edges of $G$.
Then,  if  a partial $G$-matrix   admits a psd completion,  it also has  one  of rank at most $\gd(G)$.
This motivates the study of bounds for  $\gd(G)$.

\medskip
As we will see in Section~\ref{basic},  the class of graphs with $\gd(G)\le k$ is closed under taking minors for any fixed $k$, hence it can be characterized  in terms of a finite list of forbidden minors. Our main result is such a characterization for $k\le 4$.

\medskip
\noindent{\bf Main Theorem.} For $k\le 3$, a  graph $G$ has $\gd(G)\le k$  if and only if it  has no $K_{k+1}$ minor. For $k=4$, a graph $G$ has $\gd(G)\le 4$ if and only if it  has no $K_5$ and $K_{2,2,2}$  minors.\medskip

An equivalent way of rephrasing the notion of Gram dimension 
 is in terms of ranks of feasible solutions to semidefinite programs.
 Indeed, the  Gram dimension of a graph $G=(V,E)$ is at most $k$ if and only if the set
 $$S(G,a)=\{ X \succeq 0 \mid X_{ij}=a_{ij} \text{ for }  ij \in V \cup E\}$$
 contains a matrix of rank at most $k$ for all $a\in \oR^{V\cup E}$ for which $S(G,a)$ is not empty.
The set $S(G,a)$ is a typical instance of spectrahedron. Recall that a  
  {\em spectrahedron} is the convex region  defined as the intersection of the positive semidefinite cone with a finite set  of linear subspaces, i.e., the feasibility region 
  of a semidefinite program  in canonical  form:
\begin{equation}\label{sdpsform}\max \la A_0,X \ra \text{ subject to } \la A_j,X\ra=b_j,\   (j=1,\ldots,m), \qquad X \succeq 0.
\end{equation}
  If  the feasibility region
  of (\ref{sdpsform}) is not empty,
it follows from well known geometric results that it contains a 
matrix $X$ of rank $k$ satisfying 
${k+1\choose 2}\le m$, that is, $k\le \lfloor \frac{\sqrt{8m+1}-1}{2}\rfloor$
 (see \cite{Bar01}). 
 Applying this to the spectrahedron $S(G,a)$,  
  we obtain the  bound
 $\gd(G)=O(\sqrt{|V|+|E|)}$, which is however weak  in general.

\medskip
As an application,  the Gram dimension can be used to bound the rank
 of optimal solutions to semidefinite programs. 
 Indeed consider  a semidefinite program 
in canonical form  (\ref{sdpsform}). Its {\em aggregated sparsity pattern} is the graph $G$  with node set $[n]$ and 
 whose edges are the pairs corresponding to the positions where at least one of the matrices  $A_j$ ($j\ge 0$) has a nonzero entry.
Then, whenever (\ref{sdpsform}) attains its maximum, it admits  an optimal solution of rank at most $\gd(G)$. 
Results ensuring existence of low rank solutions are important, in particular, for approximation algorithms.
Indeed 
semidefinite programs are widely used as convex tractable relaxations to hard combinatorial problems. Then the rank one solutions typically correspond  to the  desired  optimal solutions of the discrete problem and low rank solutions can lead to improved performance guarantees (see e.g. the result of \cite{AZ05} for max-cut).

As an illustration,
consider the max-cut problem for graph $G$ and its standard semidefinite programming relaxation:
\begin{equation}\label{maxcut}\max \frac{1}{4}\la L,X\ra \text{ subject to  }  X_{ii} =1\  (i=1,\ldots,n), \ \   X\succeq 0,
\end{equation}
where $L$ denotes the Laplacian matrix of $G$. Clearly, 
 the aggregated sparsity pattern of  program 
(\ref{maxcut}) is equal to  $G$. In particular, our main Theorem implies that if $G$ does not have $K_5$ and $K_{2,2,2}$ minors, then program (\ref{maxcut}) has an optimal
solution of rank at most four. Of course, this is not of great  interest  since for $K_5$ minor free graphs, the  max-cut problem  can be solved in polynomial time (\cite{B83}).

In a similar flavor, for a graph $G=([n],E)$ and  $w\in \oR^{V\cup E}_+$,   the problem of computing bounded rank solutions  to semidefinite programs of the form
$$\max \sum_{i=1}^n w_{i}X_{ii}\ \text{ s.t. } \sum_{i,j=1}^nw_{i} w_{j}X_{ij}=0,\
X_{ii}+X_{ij}-2X_{ij}\le w_{ij}\ (ij\in E),\ X\succeq 0,$$
 has been studied in \cite{GHW}.
In particular, it is shown in \cite{GHW} that there always exists an  optimal solution of rank at most the tree-width of $G$ plus 1.
There are numerous other results related to geometric representations of graphs; we refer, e.g., 
 to \cite{Hog08,Lo95,Lo01} for further results and references.
 
\medskip
Yet another, more geometrical, way of interpreting the Gram dimension is in terms of graph
 embeddings  in the spherical metric space. 
 For this, consider the unit sphere $\bS^{k-1}=\{x \in \oR^k |\  \|x\|=1\}$,  equipped with the distance 
$$d_{\bS}(x,y)=\arccos (x^Ty) \ \text{ for } x,y\in \bS^{k-1}.$$
Here, $\|x\|$ denotes the usual Euclidean norm.
Then $(\bS^{k-1},d_{\bS})$ is a metric space, known as  the {\em spherical metric space.}  
A graph $G=([n],E)$ has Gram dimension at most $k$ if and only if, for any assignment of vectors $p_1,\ldots,p_n \in \bS^d$ (for some $d\ge 1$),
there exists another assignment  $q_1,\ldots,q_n \in \bS^{k-1}$ such that 
$$d_{\bS}(p_i,p_j)=d_{\bS}(q_i,q_j), \text{ for } ij \in E.$$
In other words, this is the question of deciding whether a partial matrix can be embedded in the $(k-1)$-dimensional spherical space.
The analogous question for the Euclidean metric space $(\oR^k,\|\cdot \|)$ 
has been extensively studied. In particular,  Belk and Connelly \cite{Belk,BC}  show the following result 
for the graph parameter 
$\fedm(G)$, the analogue of 
$\gd(G)$ for Euclidean embeddings,  introduced in Definition \ref{defedm}. 

\begin{theorem}\label{theoBC}  For $k\le 2$,  $\fedm(G)\le k$  if and only if $G$ has no $K_{k+2}$ minor. For $k=3$,   $\fedm(G)\le 3$ if and only if $G$ does not have $K_5$ and $K_{2,2,2}$  minors. 
\end{theorem}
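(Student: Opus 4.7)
The plan is to derive Theorem~\ref{theoBC} from the Main Theorem by establishing a dimensional correspondence between Euclidean realizations of $G$ and Gram realizations of $G$. The first observation is that the forbidden minor lists in the two theorems coincide after a shift: the Main Theorem at level $k+1$ and Theorem~\ref{theoBC} at level $k$ both feature $K_{k+2}$ for $k \le 2$, and both feature $\{K_5,K_{2,2,2}\}$ at the crucial level $k=3$. This suggests the key identity
\[
\fedm(G) = \gd(G) - 1,
\]
from which Theorem~\ref{theoBC} will follow by direct substitution into the Main Theorem.

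For the inequality $\fedm(G) \le \gd(G)-1$, I will start from any Euclidean realization $u_1,\ldots,u_n$ of $G$ and form the Gram matrix $X_{ij}=u_i^\top u_j$. By the Main Theorem there exists $X' \succeq 0$ of rank at most $\gd(G)$ with $\pi_{VE}(X') = \pi_{VE}(X)$. Writing $X' = VV^\top$ with rows $v_1,\ldots,v_n \in \oR^{\gd(G)}$, the matching on diagonals and edges forces $\|v_i - v_j\| = \|u_i - u_j\|$ for every $ij \in E$. The differences $v_i - v_1$ then span a subspace of dimension at most $\gd(G)-1$ (one less than the linear span of $\{v_i\}$, since $0$ does not lie in the affine span of $\{v_i\}$ generically; the degenerate case can be resolved by a small perturbation of $X'$ within the rank-$\gd(G)$ PSD matrices matching $\pi_{VE}(X)$), yielding a realization in $\oR^{\gd(G)-1}$ with the prescribed edge distances.

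For the converse $\gd(G) \le \fedm(G)+1$, I will take any $X \succeq 0$, extract the edge squared distances $d_{ij}^2 = X_{ii}+X_{jj}-2X_{ij}$, and apply the hypothesis to obtain $u_1,\ldots,u_n \in \oR^{\fedm(G)}$ with $\|u_i - u_j\| = d_{ij}$ for every $ij\in E$. I then lift to $v_i = (u_i, \alpha_i) \in \oR^{\fedm(G)+1}$, choosing the scalars $\alpha_i$ so that $\|v_i\|^2 = X_{ii}$ and $v_i^\top v_j = X_{ij}$ on every edge. A direct computation reduces these two requirements to $\alpha_i = \alpha_j$ on every edge $ij$, hence (on each connected component) to a common value $\alpha$ satisfying $\|u_i\|^2 = X_{ii} - \alpha^2$ for all $i$. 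This identity is to be enforced by translating the Euclidean realization, which reduces to a linear system in the translation vector $t$ whose consistency will follow from the positive semidefiniteness of $X$ and the edge-distance constraints on the $u_i$.

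The hardest part will be verifying the consistency of this last linear system in the converse direction. Once the identity $\fedm(G) = \gd(G) - 1$ is established, Theorem~\ref{theoBC} becomes an immediate corollary of the Main Theorem. Separately, the necessity direction---that $\fedm(K_{k+2}) > k$ for $k \le 2$ and $\fedm(K_5), \fedm(K_{2,2,2}) > 3$---must also be verified: the complete-graph cases follow from a dimension count of generic configurations, whereas the $K_{2,2,2}$ case requires exhibiting a ``rigid'' realization of the octahedron that cannot be reduced to $\oR^3$.
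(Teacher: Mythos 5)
Your route is to deduce Theorem~\ref{theoBC} from the Main Theorem via the identity $\fedm(G)=\gd(G)-1$, but that identity is not available: the paper (which cites Theorem~\ref{theoBC} from Belk and Connelly rather than proving it) establishes only the inequality $\fedm(G)\le \gd(G)-1$ (inequality~(\ref{in:gd>ed}), obtained from $\gd(G)=\fedm(\nabla G)$ in Lemma~\ref{covariance} together with Theorem~\ref{lemedmdim}). The reverse inequality $\gd(G)\le \fedm(G)+1$, equivalently $\fedm(\nabla G)\le \fedm(G)+1$, is posed in Section~\ref{seclinks} as an open question, and the paper only remarks that a positive answer \emph{would} make Theorem~\ref{theomain} equivalent to Theorem~\ref{theoBC}. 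Your argument for this direction does not close the gap: after your (correct) reduction to a common $\alpha$ on each component, you must find a translation $t$ and a scalar $\alpha$ with $\|u_i+t\|^2=X_{ii}-\alpha^2$ for every node $i$, i.e.\ $n$ equations in $\fedm(G)+1$ unknowns. Solvability of this system is literally the statement that an apex node (placed at $-t$) can be adjoined to a realization of $G$ in $\oR^{\fedm(G)}$, i.e.\ it is the open question itself; positive semidefiniteness of $X$ only guarantees such a configuration in some possibly high dimension (a Gram representation of $X$ with the apex at the origin), and says nothing about the particular low-dimensional realization $u$ you start from.

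The other inequality is also gapped as you argue it: ``$0$ avoids the affine span generically, otherwise perturb $X'$'' fails because the set of rank-at-most-$\gd(G)$ psd completions of $\pi_{VE}(X)$ can be a single matrix. The paper's own $K_{2,2,2}$ construction in Lemma~\ref{lemK222} is such a case: the completion is unique, has rank $5=\gd(K_{2,2,2})$, and its Gram points $e_1,\dots,e_5,(e_1+e_2)/\sqrt{2}$ affinely span $\oR^5$, so the origin lies in their affine hull and your construction only returns a realization in dimension $\gd(G)$, not $\gd(G)-1$. The correct substitute is exactly the proof of Theorem~\ref{lemedmdim} (Schrijver's argument): realize the suspension with the apex at height $t$, re-embed in $\oR^{\fedm(\nabla G)}=\oR^{\gd(G)}$, normalize, let $t\to\infty$ along a convergent subsequence, and show the last coordinates vanish in the limit --- a compactness argument rather than a single perturbation. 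Even with that inequality in hand, your plan would deliver only the sufficiency half of Theorem~\ref{theoBC} from Theorem~\ref{theomain}; the necessity half still needs minor monotonicity of $\fedm$ and the lower bounds $\fedm(K_{k+2})\ge k+1$ and $\fedm(K_{2,2,2})\ge 4$, which you only gesture at and which cannot be imported from the Gram side without the open direction of the identity.
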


There is a  striking similarity between our main Theorem and Theorem~\ref{theoBC} above.  This is no coincidence, since these two parameters are very closely related as we will see in Section \ref{seclinks}.

\medskip
The paper is orgranized as follows. In Section~\ref{basic} we give definitions   and  establish some basic properties of the graph parameter $\gd(G)$.
 In   Section~\ref{gramdim4} we sketch the proof of our main Theorem. In Section~\ref{boundingviasdp} we  show how we can use semidefinite programming in order to prove that $\gd(V_8)$ and $\gd(C_5\times C_2)$ are both at most  four. In Section~\ref{seclinks}  we will elaborate between the similarities and differences between the two graph parameters $\gd(G)$ and $\fedm(G)$. Section~\ref{sec:complexity} discusses  the complexity of the natural decision problem  associated with the graph parameter $\gd(G)$. 
 
\medskip
 {\bf Note.} The extended version of this paper is available at~\cite{LV12}. Complexity issues associated with the parameter $\gd(G) $ are further discussed in~\cite{NLV12}.

\section{Basic definitions and properties}\label{basic}
For a  graph $G=(V=[n],E)$ let  $ \PSD(G)=\pi_{VE}(\psd)\subseteq \oR^{V\cup E}$ denote the projection of the positive semidefinite cone onto $\oR^{V\cup E}$,  whose elements can be seen as the partial  $G$-matrices  
 that can be completed to a psd matrix. Let $\EE_n$ denote the set of matrices in $\SSS^n_+$ with an all-ones diagonal (aka the correlation matrices), and let
$\EE(G)=\pi_{E}(\EE_n)\subseteq \oR^E$ denote its projection onto the edge subspace $\oR^E$,  known as the {\em elliptope} of $G$;
we only project on the edge set since all diagonal entries are  implicitly known and equal to one for matrices in $\EE_n$.
\begin{definition}Given a graph $G=(V,E)$ and a vector $a\in \oR^{V \cup E}$,  a Gram representation of $a$ in $\oR^k$
consists of  a set of vectors $\p_1,\ldots,\p_n\in \oR^k$  such that $$\p_i^T\p_j=a_{ij}\  \forall  ij \in V \cup E.$$
The Gram dimension of   $a\in \PSD(G)$, denoted as $\gd(G,a)$,  is  the smallest integer $k$ for which $a$ has a Gram representation in $\oR^k$. 
\end{definition}

\begin{definition} 
The Gram dimension of a graph $G=(V,E)$ is defined as
\begin{equation}\label{gramdimdef}
 \gd(G)=\underset{a \in \PSD(G)}{\max} \gd(G,a).
 \end{equation}
\end{definition}
We  denote by  $\GG_k$  the class of graphs for which $\gd(G)\le k$. 
Clearly, the maximization in (\ref{gramdimdef}) can be restricted  to be taken over  all $a \in \EE(G)$ (where all diagonal entries are implicitly taken to be equal to 1).

\medskip
We now investigate the behavior of the graph parameter $\gd(G)$ under some simple graph operations.

\begin{lemma}\label{lemminor}
The graph parameter $\gd(G)$ is monotone nondecreasing with respect to edge deletion and contraction.
That is, if $H$ is a minor of $G$ (denoted as $H\preceq G$), then $\gd(H)\le \gd(G)$.
\end{lemma}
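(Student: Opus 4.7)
The plan is to reduce to the two elementary minor operations, edge deletion and edge contraction (the deletion of an isolated vertex is handled trivially by padding a matrix with a zero row and column).

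For edge deletion, suppose $H = G \setminus e$ with $e \in E$. Given any $a^H \in \PSD(H)$, pick $X \in \SSS^n_+$ whose projection onto the coordinates $V \cup E(H)$ equals $a^H$ and set $a^G := \pi_{VE}(X) \in \PSD(G)$. By the definition of $\gd(G)$ there is some $X' \in \SSS^n_+$ of rank at most $\gd(G)$ with $\pi_{VE}(X') = a^G$. Restricting to the smaller coordinate set $V \cup E(H) \subseteq V \cup E(G)$ still recovers $a^H$, hence $\gd(H, a^H) \le \gd(G)$.

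The contraction case is the interesting one. Suppose $H = G/ij$ arises by contracting $ij \in E$, and write $k$ for the merged vertex. Given $a^H \in \PSD(H)$ with a psd completion $X$ on the vertex set of $H$, I would build an associated matrix $X'' \in \SSS^n_+$ on the vertex set of $G$ by duplicating the $k$-th row and column so that the resulting rows and columns indexed by $i$ and $j$ are identical; in Gram language, starting from vectors $(p_v)_{v \in V(H)}$ realizing $X$ one simply assigns $p_k$ to both $i$ and $j$. Setting $a^G := \pi_{VE}(X'')$, the vector $a^G$ lies in $\PSD(G)$, agrees with $a^H$ on the coordinates inherited from $H$, and carries the extra equalities $a^G_{ii} = a^G_{jj} = a^G_{ij}$.

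Applying the defining property of $\gd(G)$ to $a^G$ now produces $Y \in \SSS^n_+$ of rank at most $\gd(G)$ matching $a^G$. The key observation, which I expect to be the main subtle step, is that the inherited equalities $Y_{ii} = Y_{jj} = Y_{ij}$ force $q_i = q_j$ for the vectors in \emph{any} Gram representation $(q_v)_{v \in V}$ of $Y$, via $\|q_i - q_j\|^2 = Y_{ii} - 2 Y_{ij} + Y_{jj} = 0$. Collapsing this common vector back onto the contracted node $k$ then yields a Gram representation of $a^H$ in $\oR^{\gd(G)}$, so $\gd(H, a^H) \le \gd(G)$ as required; everything else is routine index bookkeeping.
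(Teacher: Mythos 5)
Your proposal is correct and follows essentially the same route as the paper: for contraction, both arguments duplicate the contracted node's row and column to lift a matrix on $V(H)$ to one on $V(G)$, invoke the definition of $\gd(G)$, and then use that the forced equalities $Y_{ii}=Y_{jj}=Y_{ij}$ make the corresponding Gram vectors (hence rows) coincide, so the principal submatrix collapses back to a low-rank completion for $H$. Your explicit justification via $\|q_i-q_j\|^2=0$ is exactly the step the paper leaves implicit when it asserts $Y'_{1i}=Y'_{ni}$ for all $i$.
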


\begin{proof} Let $G=([n],E)$ and $e\in E$. It is clear that $\gd(G\backslash e)\le \gd(G)$. We show that 
$\gd(G\slash e)\le \gd(G)$. 
Say $e$ is the edge $(1,n)$ and $G\slash e=([n-1],E')$.
  Consider $ X \in \SSS_{+}^{n-1}$; we show that there exists $X'\in\SSS^{n-1}_+$ with rank at most $k=\gd(G)$ and such that $\pi_{E'}(X)=\pi_{E'}(X')$.
  For this, extend $X$ to the matrix $Y\in\SSS^n_+$ defined by $Y_{nn}=X_{11}$ and $Y_{in}=X_{1i}$ for $i\in[n-1]$.
  By assumption, there exists $Y'\in\SSS^n_+$ with rank at most $k$ such that $\pi_E(Y)=\pi_E(Y')$. Hence
  $Y'_{1i}=Y'_{ni}$ for all $i\in [n]$, so that 
  the principal submatrix $X'$ of $Y'$ indexed by $[n-1]$ has rank at most $k$ and satisfies
  $\pi_{E'}(X')=\pi_{E'}(X)$.
  \ignore{
   where $X=\left(\begin{array}{cc}
Y & b\\
b^T& 1
\end{array}\right)$. Define   $\what{X}=\left(\begin{array}{ccc}
 Y &  b& b\\
 b^T& 1& 1\\
 b^T & 1 & 1
 \end{array}\right)$ and notice that $\what{X} \in \psd$. Then there exists a matrix $X^{'} \in \psd$ with  $\ran X^{'} \le k$ such that $\pi_{E(G)}(\what{X})=\pi_{E(G)}(X^{'})$. Let $X^{'}[n]$ be the principal minor of $X^{'}$ defined by the first $n$ rows/columns. For $ij \in E(G/e)$ where $i,j\not=n-1$ we have that $X_{i,j}^{'}=X_{i,j}$.  Moreover if  $(n-1,i) \in  E(G/e)\cap E(G)$ then $X_{n-1,i}^{'}=\what{X}_{n-1,i}=X_{n-1,i}$. Lastly, for $(n-1,i) \in E(G/e)$ where $(n,i)\in E(G)$  we have that $X_{n-1,i}^{'}=X^{'}_{n,i}=\what{X}_{n,i}=X_{n-1,i}$.
 Summarizing, we have  that  $\pi_{E(G/e)}(X^{'}[n])=\pi_{E(G/e)}(X)$ and since $\ran X^{'}[n]\le k$ the claim follows. }
\qed\end{proof}

Let  $G_1=(V_1,E_1)$, $G_2=(V_2,E_2)$ be two graphs,  where  $V_1\cap V_2$  is a clique in both $G_1$ and $G_2$. Their {\em clique sum} is the graph $G=(V_1\cup V_2, E_1\cup E_2)$, also called their 
{\em clique $k$-sum} when $k=|V_1\cap V_2|$. The following result follows from well known arguments (used already, e.g.,  in \cite{GJSW87}).

\begin{lemma}\label{lemcliquesum}
If $G$ is the clique sum of two graphs $G_1$ and $G_2$,  then 
$$\gd(G)=\max\{\gd(G_1),\gd(G_2)\}.$$
\end{lemma}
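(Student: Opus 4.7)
The plan is to prove both inequalities separately. Since $G_1$ and $G_2$ arise as induced subgraphs, hence minors, of $G$, Lemma~\ref{lemminor} gives immediately that $\max\{\gd(G_1),\gd(G_2)\}\le \gd(G)$.

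For the harder direction $\gd(G)\le k$, where $k:=\max\{\gd(G_1),\gd(G_2)\}$, I would start from an arbitrary $X\in \SSS^n_+$ (with $n=|V_1\cup V_2|$) and construct $X'\in \SSS^n_+$ of rank at most $k$ agreeing with $X$ on $V\cup E$. Denote by $X^{(i)}$ the principal submatrix of $X$ indexed by $V_i$. By the assumption $\gd(G_i)\le k$, there exists $Y^{(i)}\in \SSS^{|V_i|}_+$ of rank at most $k$ with $\pi_{V_iE_i}(Y^{(i)})=\pi_{V_iE_i}(X^{(i)})$. The crucial observation is that $K:=V_1\cap V_2$ is a clique in both $G_i$, so every off-diagonal pair from $K$ belongs to $E_i$; this forces $Y^{(1)}[K]=X[K]=Y^{(2)}[K]$, where $M[K]$ denotes the principal submatrix indexed by $K$.

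The key technical step is to glue $Y^{(1)}$ and $Y^{(2)}$ into a single psd matrix of rank at most $k$. Factor $Y^{(i)}=(Q^{(i)})^TQ^{(i)}$ with $Q^{(i)}\in \oR^{k\times |V_i|}$. Then the two configurations of vectors $Q^{(1)}|_K$ and $Q^{(2)}|_K$ in $\oR^k$ have the same Gram matrix, so by a standard linear-algebraic fact there exists an orthogonal $k\times k$ matrix $U$ with $UQ^{(2)}|_K=Q^{(1)}|_K$. Replacing $Q^{(2)}$ by $UQ^{(2)}$ preserves the Gram matrix $Y^{(2)}$, and we may therefore assume that $Q^{(1)}$ and $Q^{(2)}$ agree on the columns indexed by $K$. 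Assemble a single matrix $Q\in \oR^{k\times n}$ by taking the columns of $Q^{(1)}$ on $V_1$ and the rotated columns of $Q^{(2)}$ on $V_2\setminus V_1$, and set $X':=Q^TQ$. Then $X'\succeq 0$ has rank at most $k$; and for each $ij\in V\cup E$ either $i,j\in V_1$ or $i,j\in V_2$, giving $X'_{ij}=Y^{(1)}_{ij}=X_{ij}$ in the first case and $X'_{ij}=Y^{(2)}_{ij}=X_{ij}$ in the second.

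The only genuine obstacle is the gluing step, which rests on the classical observation that two Gram representations in $\oR^k$ of the same psd matrix differ by an orthogonal transformation of the ambient space; once this is granted, the rest is bookkeeping. This is essentially the same glueing argument used in the standard proof that chordal graphs have the psd completion property \cite{GJSW87}.
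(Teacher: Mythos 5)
Your proposal is correct and follows essentially the same route as the paper's argument: the easy direction via Lemma~\ref{lemminor}, then gluing rank-$\le k$ Gram representations of the two pieces by using that $V_1\cap V_2$ is a clique (so the two representations have equal Gram matrices there) and applying an orthogonal transformation to align them. No gaps; the bookkeeping and the appeal to the standard fact about Gram representations differing by an orthogonal map are exactly what the paper does.
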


\ignore{

\begin{proof}
It suffices to show  $\gd(G)\le k:=\max \{\gd(G_1),\gd(G_2)\}$ (the other inequality follows from Lemma \ref{lemminor}).
For this let $x\in \PSD(G)$ and consider its projections $x_i\in \PSD(G_i)$ for $i=1,2$.
By assumption, $x_i$ admits a Gram representation $u_{i,h}\in \oR^k$ ($h\in V_i$) for $i=1,2$.
As $V_0=V_1\cap V_2$ induces a clique in both $G_1,G_2$, 
 we have $u_{1,h}^Tu_{1,k}=u_{2,h}^Tu_{2,k}$ for all $h,k\in V_0$. Hence  there exists an orthogonal matrix $U$ mapping $u_{1,h}$ to $u_{2,h}$ for all $h\in V_0$.
Therefore, the vectors $w_h:= Uu_{1,h}$ for $h\in V_1$ and $w_h:=u_{2,h}$ for $h\in V_2$ form a Gram representation 
 of $x$ in $\oR^k$.
\qed \end{proof}

}

As a direct application, one can bound the Gram dimension of partial $k$-trees.
Recall that a graph $G$ is a  {\em $k$-tree} if it is a clique $k$-sum of copies of $K_{k+1}$ and  a {\em partial $k$-tree} if it is a subgraph of a $k$-tree (equivalently, $G$ has tree-width $k$).
Partial 1-trees are exactly the forests and  partial 2-trees (aka series-parallel graphs) are the graphs  with no $K_4$ minor (see \cite{Du65}).

\begin{lemma}\label{ktrees} If $G$ is a partial $k$-tree then $\gd(G)\le k+1$.
\end{lemma}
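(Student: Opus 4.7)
The plan is to reduce the statement to the two structural lemmas just established (Lemmas~\ref{lemminor} and~\ref{lemcliquesum}) and to a trivial base case for the complete graph $K_{k+1}$.

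First, since $\gd$ is monotone with respect to edge deletion by Lemma~\ref{lemminor}, and since a partial $k$-tree is by definition a subgraph of a $k$-tree, it suffices to prove the bound $\gd(G)\le k+1$ under the stronger assumption that $G$ itself is a $k$-tree. Thus the problem reduces to handling the recursive structure of $k$-trees.

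Second, I would argue by induction on the number of vertices of the $k$-tree $G$. The base case is $G=K_{k+1}$: here every pair $ij\in V\cup E$ is specified, so for any $a\in \PSD(K_{k+1})$ the whole partial matrix $a$ is itself an $(k+1)\times(k+1)$ psd matrix. Its rank is automatically at most $k+1$, and so any Gram representation lives in $\oR^{k+1}$, giving $\gd(K_{k+1})\le k+1$. For the inductive step, any $k$-tree $G$ on more than $k+1$ vertices can be written as a clique $k$-sum of a smaller $k$-tree $G_1$ with a copy $G_2=K_{k+1}$, glued along a common $k$-clique (which is a clique in both parts). Applying Lemma~\ref{lemcliquesum} yields
\[
\gd(G) \;=\; \max\{\gd(G_1),\gd(G_2)\} \;\le\; k+1,
\]
using the inductive hypothesis for $G_1$ and the base case for $G_2$.

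There is essentially no hard step here; the only point to be careful about is checking that the recursive construction of $k$-trees genuinely allows one to peel off a single $K_{k+1}$ attached along a $k$-clique, so that Lemma~\ref{lemcliquesum} applies at each step. Once this is in place, combining edge-deletion monotonicity with the clique-sum identity gives the desired bound for every partial $k$-tree.
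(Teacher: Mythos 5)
Your proof is correct and is essentially the argument the paper intends: the paper states the lemma as a direct application of Lemma~\ref{lemcliquesum} (together with Lemma~\ref{lemminor}), using exactly the recursive clique-$k$-sum structure of $k$-trees with base case $\gd(K_{k+1})=k+1$. Your write-up just makes the induction and the subgraph-monotonicity reduction explicit.
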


For example,  for the complete graph $K_n$, $\gd(K_n)=n$, and $\gd(K_n\setminus e)=n-1$ for any edge $e$ of $K_n$. Moreover, for the complete bipartite graph $K_{n,m}$ ($n \le m$),     $\gd(K_{n,m})=n+1$ (since $K_{n,m}$ is a partial $n$-tree and contains a $K_{n+1}$ minor).

\medskip
In view of Lemma \ref{lemminor}, the class $\GG_k$ of graphs with Gram dimension at most $k$ is closed under taking minors. 
Hence, by the celebrated graph minor theorem \cite{RS}, 
it can be characterized by finitely many minimal forbidden minors. 
The simple properties we just established  suffice to characterize $\GG_k,$ for $k\le 3$.
\begin{theorem}\label{theosmallr}
For $k\le 3$, $\gd(G)\le k$  if and only if $G$ has no minor $K_{k+1}$.
\end{theorem}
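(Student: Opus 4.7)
The plan is to derive the statement directly from the tools already established in Section~\ref{basic}, by handling the two implications separately.

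\textbf{Necessity.} I would argue contrapositively: suppose $G$ contains $K_{k+1}$ as a minor. The paragraph following Lemma~\ref{ktrees} records that $\gd(K_n)=n$, so in particular $\gd(K_{k+1})=k+1$. Applying the minor-monotonicity established in Lemma~\ref{lemminor} then gives $\gd(G)\ge \gd(K_{k+1})=k+1>k$, contradicting $\gd(G)\le k$.

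\textbf{Sufficiency.} Here I would invoke the classical structural characterizations of $K_{k+1}$-minor-free graphs for $k\le 3$, together with Lemma~\ref{ktrees}. Specifically, for $k=1$ a $K_2$-minor-free graph is edgeless, i.e.\ a partial $0$-tree; for $k=2$ a $K_3$-minor-free graph is a forest, i.e.\ a partial $1$-tree; and for $k=3$ a $K_4$-minor-free graph is series-parallel, i.e.\ a partial $2$-tree, as proved in~\cite{Du65}. Uniformly, for $k\le 3$ every $K_{k+1}$-minor-free graph is a partial $(k-1)$-tree. Lemma~\ref{ktrees} applied with parameter $k-1$ then yields $\gd(G)\le (k-1)+1=k$, which is the desired bound.

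\textbf{Main obstacle.} There is essentially none for $k\le 3$: once the parameter $\gd(G)$ has been shown to be minor-monotone, bounded by $k+1$ on partial $k$-trees, and exact on complete graphs, the theorem reduces to a routine application of classical excluded-minor characterizations of small tree-width. The only ``external'' ingredient is Duffin's characterization of series-parallel graphs, which is quoted from~\cite{Du65}. The genuinely new content of the paper sits in the $k=4$ case, where no such clean tree-width description exists and the additional forbidden minor $K_{2,2,2}$ must be handled.
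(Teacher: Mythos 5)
Your proof is correct and follows exactly the route the paper intends: the authors state that Theorem~\ref{theosmallr} follows from "the simple properties we just established," namely minor-monotonicity (Lemma~\ref{lemminor}), $\gd(K_{k+1})=k+1$ for necessity, and the classical fact that $K_{k+1}$-minor-free graphs are partial $(k-1)$-trees (edgeless, forests, series-parallel via~\cite{Du65}) combined with Lemma~\ref{ktrees} for sufficiency. No discrepancies to report.
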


The next natural question  is to characterize the graphs with Gram dimension at most 4, which we address in the next section.

\section{Characterizing graphs with Gram dimension  at most  4}\label{gramdim4}

In this section  we characterize the class  of graphs with Gram dimension at most 4.
Clearly, $K_5$ is a minimal forbidden minor for $\GG_4$.
We now  show that this is also the case for the complete tripartite graph $K_{2,2,2}$.
\ignore{
\begin{figure}[h]
\centering \includegraphics[scale=0.5]{figs/K222-ML}
\caption{The graph $K_{2,2,2}$.} 
\label{k222}
\end{figure}
}

\begin{lemma}\label{lemK222}
The graph $K_{2,2,2}$ is a minimal forbidden minor for   $\GG_4$.
\end{lemma}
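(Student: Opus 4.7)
My plan is to split the proof into two parts: (a) establishing $\gd(K_{2,2,2})\ge 5$, and (b) verifying that every proper minor of $K_{2,2,2}$ lies in $\GG_4$. Part (b) will be a combinatorial verification using the tools already established, while Part (a) is where the real work lies.

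For Part (b), I would first note that $K_{2,2,2}$ (the octahedron graph, equivalently $L(K_4)$) is vertex- and edge-transitive, so it suffices to handle a single edge deletion and a single edge contraction. Labelling the color classes $\{a,a'\},\{b,b'\},\{c,c'\}$ and fixing $e=(a,b)$: for the contraction, the merged vertex becomes adjacent to the four remaining vertices $\{a',b',c,c'\}$, which themselves induce $K_4$ minus the non-edge $(c,c')$; hence $K_{2,2,2}/e\cong K_5-e'$, and $\gd(K_5-e')=4$ by the remark after Lemma~\ref{ktrees}. For the deletion, I would exhibit the explicit tree-decomposition of $K_{2,2,2}\setminus e$ with the three bags $\{a,b',c,c'\}$, $\{a',b',c,c'\}$, $\{a',b,c,c'\}$ arranged on a path, and check directly that all eleven remaining edges are covered and that each vertex occupies a contiguous subpath. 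This certifies width~$3$, so $K_{2,2,2}\setminus e$ is a partial $3$-tree and Lemma~\ref{ktrees} gives $\gd(K_{2,2,2}\setminus e)\le 4$.

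For Part (a), my plan is to exhibit an explicit $a\in \EE(K_{2,2,2})$ with $\gd(K_{2,2,2},a)\ge 5$. Concretely, I would choose six unit vectors $u_1,u_{1'},u_2,u_{2'},u_3,u_{3'}$ in some $\oR^d$ (with $d\ge 5$) in sufficiently generic position, set $a_{ij}=u_i^T u_j$ for $ij\in E$, and argue that no rank-$4$ psd matrix matches $a$ on $V\cup E$. The set of psd completions of $a$ forms a three-parameter affine family, parameterized by the non-edge entries $X_{1,1'},X_{2,2'},X_{3,3'}$; the rank-$\le 4$ locus in $\SSS^6$ has codimension $\binom{6-4+1}{2}=3$; so heuristically their intersection is zero-dimensional. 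The goal is to choose the $u_i$ so that each candidate rank-$4$ matrix in this finite set violates the positivity of some principal minor.

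The hard part will be executing this positivity verification rigorously. I foresee two natural avenues: (i) pick a concrete numerical configuration and work out the three-parameter family of quartic determinantal conditions for rank $\le 4$, checking directly that each solution has a negative principal minor; or (ii) give a structural argument via the Schoenberg correspondence, adjoining a ghost origin vertex to $K_{2,2,2}$ to translate the Belk--Connelly obstruction witnessing $\fedm(K_{2,2,2})\ge 4$ (Theorem~\ref{theoBC}) into a rank-$5$ Gram obstruction, leveraging the close link between $\gd$ and $\fedm$ elaborated in Section~\ref{seclinks}. I expect either route to work; the first route would give the most self-contained witness.
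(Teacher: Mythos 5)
Your Part (b) is correct and essentially matches the paper, which disposes of minimality by observing that deleting or contracting any edge of $K_{2,2,2}$ leaves a partial $3$-tree, so Lemma~\ref{ktrees} applies; your explicit width-$3$ tree-decomposition of $K_{2,2,2}\setminus e$ and the identification $K_{2,2,2}/e\cong K_5\setminus e'$ (with $\gd(K_5\setminus e')=4$) are valid ways of verifying the same thing, and edge-transitivity legitimately reduces the check to one edge. The genuine gap is Part (a), which is the heart of the lemma and which your proposal does not prove. The dimension count (a $3$-parameter family of completions against the codimension-$3$ rank-$\le 4$ locus) is only a heuristic: it shows neither that the intersection avoids the psd cone nor that ``sufficiently generic'' $u_i$ force this; note that for special $a\in\EE(K_{2,2,2})$, e.g.\ $a=0$, rank-$3$ completions do exist, so the conclusion really depends on exhibiting and analyzing a concrete witness, and you leave both of your proposed avenues unexecuted (``I expect either route to work'' is not a proof, and avenue (i) as described would require a nontrivial determinantal/positivity computation you have not done).

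The missing idea is that the paper chooses its witness in the opposite spirit from generic position: label the nodes $1,\dots,6$ of $K_{2,2,2}=K_6\setminus\{(1,4),(2,5),(3,6)\}$ by $e_1,\dots,e_5$ and $(e_1+e_2)/\sqrt 2$, and let $x$ be the projection of the resulting Gram matrix $X$. The built-in linear dependency among the columns of $X$ forces the three unspecified entries $X_{14},X_{25},X_{36}$ in terms of the specified ones, so $X$ is the \emph{unique} psd completion of $x$; since it has rank $5$, $\gd(K_{2,2,2},x)=5$ with no case analysis at all. Your avenue (ii) can in fact be made rigorous within the paper's framework, via $\gd(K_{2,2,2})=\fedm(\nabla K_{2,2,2})\ge \fedm(K_{2,2,2})+1\ge 5$, using Lemma~\ref{covariance}, Theorem~\ref{lemedmdim} and Theorem~\ref{theoBC}; but as written it is only a pointer, and it leans on the nontrivial limiting argument behind Theorem~\ref{lemedmdim} and on importing Belk--Connelly's lower bound for the octahedron, whereas a short self-contained witness of the kind above is what the lemma needs (and is what makes the paper's characterization independent of, rather than reliant on, Theorem~\ref{theoBC}).
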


\begin{proof}
First we construct $x\in \EE(K_{2,2,2})$ with $\gd(K_{2,2,2},x)=5$. For this, let $K_{2,2,2}$ be obtained from $K_6$ by deleting the edges $(1,4)$, $(2,5)$ and $(3,6)$. Let $e_1,\ldots,e_5$ denote the standard unit vectors in $\oR^5$,
 let $X$ be the Gram matrix of the vectors 
$e_1,e_2,e_3,e_4,e_5$ and $(e_1+e_2)/\sqrt 2$ labeling the nodes $1,\ldots,6$, respectively, and let 
 $x\in \EE(K_{2,2,2})$ be the projection of $X$. We now verify that $X$ is the unique psd completion  of $x$ which shows  that $\gd(K_{2,2,2})\ge 5$. Indeed the chosen Gram labeling of the matrix $X$ implies the following linear dependency:  $C_6=(C_4+C_5)/\sqrt 2$ among its columns $C_4,C_5,C_6$ indexed respectively by $4,5,6$;
this implies that the unspecified entries $X_{14}, X_{25}, X_{36}$ are uniquely determined in terms of the specified entries of $X$.

On the other hand, one can easily verify that  $K_{2,2,2}$ is a partial 4-tree, 
thus $\gd(K_{2,2,2})\le 5$. Moreover,  
deleting or contracting an edge in $K_{2,2,2}$ yields a partial 3-tree, thus with Gram dimension at most 4.
\ignore{
Next we verify that $K_{2,2,2}$ is a minimal forbidden minor for membership in $\GG_4$, i.e., 
  when deleting or contracting an edge one obtains a graph with Gram dimension at most 4.
Let $G$ be the graph obtained by deleting  the edge $(1,2)$ in $K_{2,2,2}$. If we add the edge $(3,6)$ to $G$, we obtain  
a graph which is the clique sum of three cliques on four nodes, namely the cliques on $\{3,4,5,6\}$, $\{2,3,4,6\}$, and on $\{1,3,5,6\}$. This implies that the Gram dimension of $G$ is at most 4.

Let $G$ be the graph obtained by contracting  the edge $(1,2)$ in $K_{2,2,2}$. As the subgraph of $G$ induced by $\{3,4,5,6\}$ is the clique sum of two $K_3$'s, its Gram dimension is at most 3 and thus the Gram dimension of $G$ is at most~4.
}		
\qed\end{proof}

By Lemma~\ref{ktrees} we know that all partial 3-trees belong to  $\GG_4$. Moreover, it is known that partial 3-trees can be characterized in terms of four forbidden minors as stated below. 
\begin{figure}[h] 
\centering \includegraphics[scale=0.4]{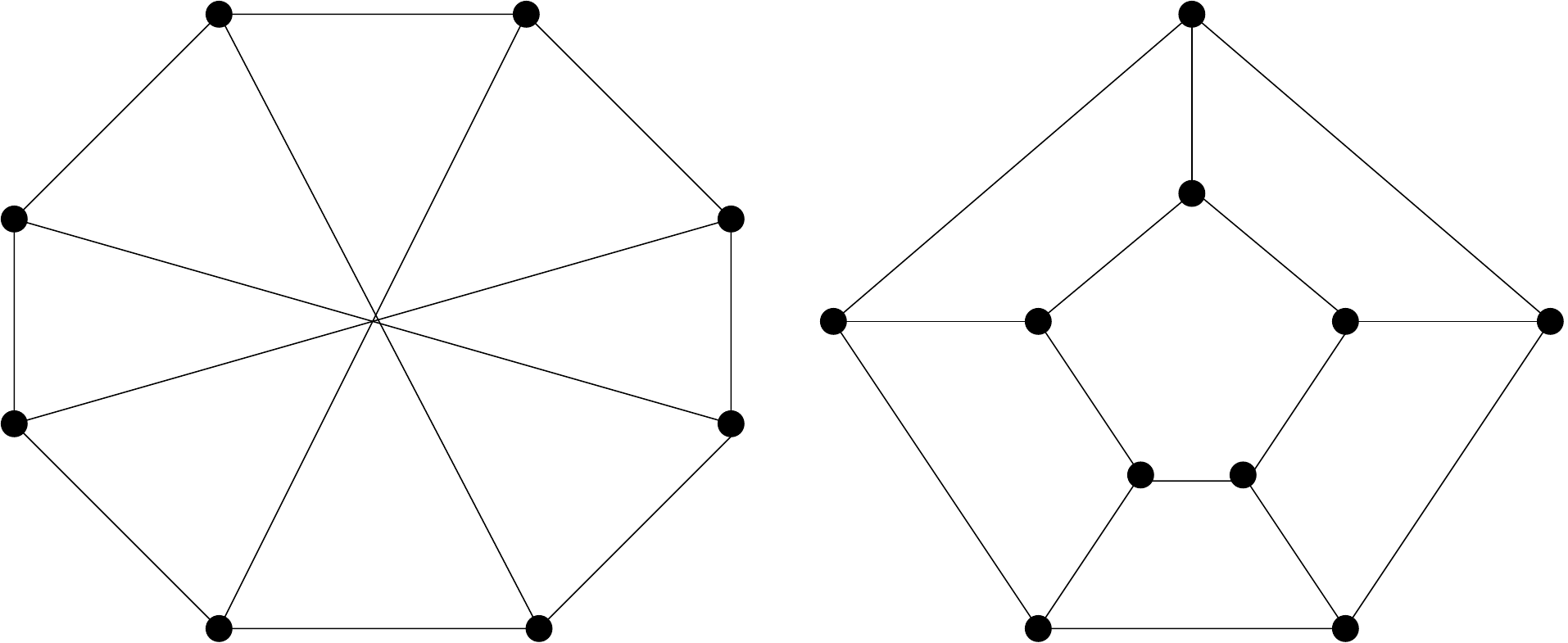}
\caption{The graphs $V_8$ and $C_5\times C_2$.}
\label{v8+c2c5} 
\end{figure}

\begin{theorem}\cite{APC90} \label{theo3tree}
A graph $G$ is a partial 3-tree if and only if $G$ does not have $K_5,K_{2,2,2}, V_8$ and $C_5 \times C_2$ as a minor.
\end{theorem}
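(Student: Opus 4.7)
The plan is to prove both directions separately. The easy direction (necessity) amounts to checking that each of the four graphs $K_5$, $K_{2,2,2}$, $V_8$, $C_5\times C_2$ has tree-width exactly four: this implies that any graph containing one of them as a minor has tree-width at least $4$ (tree-width being minor-monotone) and hence is not a partial 3-tree. For $K_5$ this is classical; for $K_{2,2,2}$, $V_8$, $C_5\times C_2$ one exhibits both a width-$4$ tree decomposition and a lower-bound certificate such as a bramble of order $5$ or the fact that these graphs are $4$-connected and have at least $6$ vertices. One also needs to verify minimality, i.e., that every proper minor of each of the four has tree-width $\leq 3$; this is a finite check done by contracting/deleting one edge at a time and exhibiting explicit decompositions of width three for each resulting graph.

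For the harder direction (sufficiency), the plan is to use the well-known clique-sum structure for tree-width: tree-width is preserved under clique sums of size at most $3$ (the tree-width analogue of Lemma~\ref{lemcliquesum}). Hence a graph $G$ with no minor from the list is a partial 3-tree provided every $4$-connected piece arising in its decomposition is a partial 3-tree. The natural starting point is Wagner's theorem, which states that every $K_5$-minor-free graph can be built by clique sums of size at most $3$ from planar graphs and from $V_8$. Excluding $V_8$ as a minor forces all atoms in this decomposition to be planar, so the problem reduces to proving: \emph{every $4$-connected planar graph without $K_{2,2,2}$ and without $C_5\times C_2$ as a minor is a partial $3$-tree.}

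The main obstacle is precisely this planar/$4$-connected case. I would attack it by induction on $|V(G)|$ using local reductions in a plane embedding. Concretely, one shows that in such a $G$ either there is a vertex of degree $\leq 3$ whose removal keeps $G$ in the class (allowing the inductive hypothesis plus Lemma~\ref{lemcliquesum} to produce a $3$-tree decomposition by gluing back with a clique of size $\leq 3$), or one can find a contractible edge preserving the forbidden-minor conditions. If no such reduction exists, a careful case analysis on the face sizes around a vertex of minimum degree (in a $4$-connected planar graph, $\delta(G)\in\{4,5\}$ and faces are small) is used to locate either a $K_{2,2,2}$-subdivision or a prism-like structure yielding a $C_5\times C_2$ minor, contradicting the hypothesis. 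The combinatorics of forcing one of these two specific minors in an irreducible $4$-connected planar graph is exactly the technical heart of the proof, and it is where the particular roles of $K_{2,2,2}$ (the octahedron, dual to the cube) and $C_5\times C_2$ (the pentagonal prism) naturally appear as the two "obstruction shapes" for planar partial $3$-trees.
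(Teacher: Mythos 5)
The paper does not prove this statement at all --- it is quoted from \cite{APC90} --- so the only question is whether your sketch stands on its own as a proof. It does not: the entire content of the hard (sufficiency) direction is deferred. You yourself place ``the combinatorics of forcing one of these two specific minors in an irreducible $4$-connected planar graph'' at the technical heart, but that heart is never supplied, and the scaffolding around it has problems. First, the reduced statement is misstated: a $4$-connected graph has minimum degree at least $4$, hence is not $3$-degenerate and has tree-width at least $4$, so no $4$-connected planar graph is a partial $3$-tree; what you would actually have to prove is that every $4$-connected planar graph contains $K_{2,2,2}$ or $C_5\times C_2$ as a minor, together with a decomposition argument showing every graph in the class reduces to such pieces. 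Second, the decomposition steps are not justified: Wagner's theorem decomposes an edge-maximal $K_5$-minor-free supergraph $H\supseteq G$, and a $V_8$ summand of $H$ need not be a minor of $G$, so excluding $V_8$ from $G$ does not immediately make all atoms planar. Moreover, splitting further into $4$-connected pieces means cutting along $3$-separators that are not cliques, and the torsos then acquire virtual triangle edges that need not be realizable as minors of $G$ (e.g., when one side of the cut meets the separator only through a single vertex or a claw); hence the forbidden-minor hypothesis cannot simply be passed to the pieces and the induction does not go through as described.

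In the easy (necessity) direction there are smaller but real slips: $V_8$ and $C_5\times C_2$ are cubic, hence $3$-connected but not $4$-connected, so your proposed certificate ``$4$-connected with at least $6$ vertices'' does not apply to them; to certify tree-width $4$ for these two graphs you would have to exhibit an actual obstruction (a bramble of order $5$, say), and none is given. The minimality check you include is routine but also unnecessary for the statement as quoted. Finally, note that the known proof in \cite{APC90} does not go through Wagner's decomposition at all; it builds on Arnborg and Proskurowski's reduction-rule characterization of partial $3$-trees and an analysis of irreducible graphs, so even a completed version of your plan would be a genuinely different --- and still substantial --- argument rather than a filled-in version of the cited one.
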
 
The graphs $V_8$ and $C_5\times C_2$ are shown  in Figure~\ref{v8+c2c5}.
The forbidden minors for partial 3-trees are   natural candidates for being obstructions to the class $\GG_4$. We have already seen  that  for   $K_5$ and $K_{2,2,2}$ this is indeed the case. However, this is not the true for  $V_8$ and $C_5 \times C_2$. Indeed,  in the extended version of the paper, it is proven 
that  $\gd(V_8)=\gd(C_5 \times C_2)=4$~\cite{LV12}. 
Using this, we
can now complete our characterization of the class $\GG_4$.

\begin{theorem}\label{theomain}
For a graph $G$, $\gd(G)\le 4$, if and only if $G$ does not have $K_5$ or $K_{2,2,2}$ as a minor.
\end{theorem}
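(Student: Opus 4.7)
The forward direction is immediate. By Lemma~\ref{lemminor}, $\gd$ is minor-monotone, and Lemma~\ref{lemK222} together with $\gd(K_n)=n$ yields $\gd(K_5)=\gd(K_{2,2,2})=5$. So a graph with either as a minor has Gram dimension at least $5$.

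For the reverse direction, the plan is to combine three ingredients. First, apply the Arnborg--Proskurowski--Corneil characterization (Theorem~\ref{theo3tree}): if $G$ has none of $K_5, K_{2,2,2}, V_8, C_5\times C_2$ as a minor, then $G$ is a partial $3$-tree, and Lemma~\ref{ktrees} gives $\gd(G)\le 4$ directly. Second, for the two remaining ``exceptional'' minors, use the semidefinite programming argument of Section~\ref{boundingviasdp} (established in \cite{LV12}) to obtain $\gd(V_8)\le 4$ and $\gd(C_5\times C_2)\le 4$. Third, invoke Lemma~\ref{lemcliquesum}: since the Gram dimension of a clique sum is the maximum of the Gram dimensions of the pieces, it suffices to exhibit a decomposition of every $\{K_5,K_{2,2,2}\}$-minor-free graph as clique sums of pieces each having Gram dimension at most $4$.

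The technical heart of the argument is therefore a structural decomposition theorem: every $\{K_5,K_{2,2,2}\}$-minor-free graph $G$ can be obtained by clique sums (over cliques of size at most $4$) from a collection of ``building blocks'' each belonging to $\GG_4$, namely partial $3$-trees together with subgraphs of $V_8$ and of $C_5\times C_2$. To prove this, I would induct on $|V(G)|+|E(G)|$. If $G$ is $4$-connected, then ruling out the $K_5$ and $K_{2,2,2}$ minors should force $G$ to be isomorphic to a subgraph of $V_8$ or $C_5\times C_2$ (this is the analogue, for our forbidden minor pair, of Wagner's classical decomposition of $K_5$-minor-free graphs as clique sums of planar graphs and $V_8$). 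If $G$ has a $k$-separator with $k\le 3$, one completes the separator to a clique inside each side; this completion does not create a $K_5$ or $K_{2,2,2}$ minor (precisely because the separator is small and each side retains the forbidden minor exclusion), so induction applies to each side and the pieces glue back together as a clique sum.

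The main obstacle is this structural decomposition, specifically the $4$-connected case and the verification that adding a clique on a $3$-separator preserves the exclusion of $K_5$ and $K_{2,2,2}$. Once the decomposition is in hand, combining it with Lemmas~\ref{lemcliquesum}, \ref{ktrees}, and the SDP-based bounds $\gd(V_8),\gd(C_5\times C_2)\le 4$ immediately yields $\gd(G)\le 4$. Note the pleasant parallel with Belk and Connelly's proof of Theorem~\ref{theoBC}, whose forbidden minor list for Euclidean $3$-realizability is identical to ours; one expects much of the combinatorial machinery to transfer, with the role of Euclidean realizations replaced by Gram (spherical) realizations.
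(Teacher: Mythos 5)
Your overall architecture is the same as the paper's: the ``only if'' direction via minor monotonicity (Lemma~\ref{lemminor}) and $\gd(K_5)=\gd(K_{2,2,2})=5$ (Lemma~\ref{lemK222}), and the ``if'' direction via Lemmas~\ref{lemminor} and \ref{lemcliquesum}, the bounds $\gd(V_8)=\gd(C_5\times C_2)=4$ established by the SDP method of Section~\ref{boundingviasdp} in \cite{LV12}, and a clique-sum decomposition of $\{K_5,K_{2,2,2}\}$-minor-free graphs. The one ingredient you flag as ``the main obstacle'' is precisely where the paper does not argue from scratch: it invokes the theorem of Belk and Connelly \cite{BC} that every graph with no $K_5$ or $K_{2,2,2}$ minor is a subgraph of a clique sum of copies of $K_4$, $V_8$ and $C_5\times C_2$. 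With that citation your argument closes (and your opening step via Theorem~\ref{theo3tree} becomes redundant, since graphs excluding all four minors are partial $3$-trees and are already covered by the $K_4$ pieces together with Lemma~\ref{ktrees}).

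As a self-contained argument, however, your sketch of the decomposition has genuine gaps. The $4$-connected case as you state it cannot work: $V_8$ and $C_5\times C_2$ are cubic, so no $4$-connected graph (minimum degree at least $4$) can be a subgraph of either; both graphs are only $3$-connected, and the correct statement is that $G$ is a \emph{subgraph of} a clique sum of the three kinds of pieces, not that $G$ itself splits along clique separators into copies of them, so the inductive set-up needs to carry the ``subgraph of'' relaxation (equivalently, one may first add edges to $G$). Moreover, the claim that completing a $\le 3$-separator to a clique on each side preserves exclusion of $K_5$ and $K_{2,2,2}$ requires an argument (one must route the added clique edges through the other, connected side of the separation, and handle separators that are not minimal or not independent); it is standard in spirit but not automatic, and it is exactly this kind of case analysis that makes the Belk--Connelly structure theorem a substantial result. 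So either cite \cite{BC} for the decomposition, as the paper does, or be prepared to prove it in full -- that structure theorem is the real combinatorial content behind both Theorem~\ref{theomain} and Theorem~\ref{theoBC}.
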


\begin{proof}
The `only if' part follows from Lemmas \ref{lemminor} and \ref{lemK222}.
The `if part' follows from  the fact that $\gd(V_8)=\gd(C_5 \times C_2)=4$ and  Lemmas \ref{lemminor}, \ref{lemcliquesum}, combined with the
following graph theoretical result, shown in \cite{BC}:
If $G$ is a graph with no $K_5$, $K_{2,2,2}$ minors, then $G$ is a subgraph of a clique sum of copies of $K_4$, $V_8$ and 
$C_5\times C_2$.
\qed
\end{proof}


\section{Using semidefinite programming}\label{boundingviasdp}

In this section we sketch the approach which we will follow in order to bound the Gram dimension of the two graphs $V_8$ and $C_5\times C_2$.

\begin{definition} 
Given a graph $G=(V=[n],E)$,  a configuration of  $G$ is an assignment  of vectors $\p_1,\ldots,\p_n \in \oR^k$  (for some $k \ge 1$)  to the nodes of $G$;
 the pair  $(G,\pb)$ is called a   framework, where we use the notation   $\pb=\{\p_1,\ldots,\p_n\}$. 
Two configurations  $\pb,\bold{q}$ of  $G$ (not necessarily lying  in the same space) are said to be equivalent if $\p_i^T\p_j=q^T_i q_j$ for all $ij \in V\cup E$.

\end{definition}

Our objective  is to show that the two graphs $G=V_8$, $C_5 \times C_2$ belong to  $\GG_4$. That is, we must show that, given any $a\in \SSS_+(G)$, one can construct a Gram representation  $\qb$ of $(G,a)$ lying in the space $\oR^4$.

Along the lines of  \cite{Belk} (which deals with Euclidean distance realizations),  our strategy to achieve this is as follows:
First, we select  an initial Gram representation $\pb$ of $(G,a)$ obtained by `stretching'  as much as possible along a given pair $(i_0,j_0)$ which is not an edge of $G$; more precisely, $\pb$ is a representation of $(G,a)$  which maximizes 
 the inner product $\p_{i_0}^T\p_{j_0}$. As suggested  in \cite{SY06}  (in the context of Euclidean distance realizations), this configuration  $\pb$ can be obtained by solving a semidefinite program; then 
  $\pb$ corresponds to  the Gram representation of an optimal solution $X$ to this program.
  
 In general we cannot yet claim that $\pb$ lies in $\oR^4$. However, we can derive useful information about $\pb$ by  using an optimal solution $\Om$ (which will correspond to a `stress matrix') to the dual semidefinite program. Indeed, the optimality condition $X\Om=0$ will imply some  linear dependencies among the $\p_i$'s  that can be used to show the existence of an equivalent representation $\qb$ of $(G,a)$ in low dimension.
 Roughly speaking, most often,   these dependencies  will force the majority  of the $\p_i$'s to lie in  $\oR^4$, and  one will be able to rotate each remaining vector $\p_j$   about the space  spanned by the vectors labeling  the neighbors of $j$ into $\oR^4$.  Showing that the initial representation $\pb$ can indeed be `folded' into $\oR^4$ as just described makes up the main body of the proof.
\ignore{
Before going into the details of the proof, we indicate  some  additional genericity assumptions that can be made w.l.o.g.  on the vector $a\in \SSS_+(G)$. This will be particularly useful when treating the graph $C_5\times C_2$.

By definition, $\gd(G)$ is the maximum value of $\gd(G,x)$ taken over all $x\in \EE(G)$.
 Clearly we can restrict the maximum to be taken over all $x$ in any given dense subset of $\EE(G)$.
 For instance, the set $\DD$ consisting of all $x\in \EE(G)$ that admit a positive definite completion in $\EE_n$ is dense in $\EE(G)$. We next identify a smaller dense subset $\DD^*$ of $\DD$ which will be useful in our study of the Gram dimension of $C_5\times C_2$.

 \begin{lemma} \label{lemD1}
Let $\DD^*$ be the set of all $x\in \EE(G)$ that admit a positive definite completion in $\EE_n$ 
satisfying the following condition:
 For any circuit $C$ in $G$, the restriction $x_C=(x_e)_{e\in C}$ of $x$ to $C$ does not admit a Gram representation in $\oR^2$.
 Then the set $\DD^*$ is  dense in $\EE(G)$.
 \end{lemma}

 \begin{proof}
 We show that $\DD^*$ is dense in $\DD$.
 Let $x\in \DD$ and set $x=\cos a$, where $a\in [0,\pi]^E$.
  Given a circuit  $C$  in $G$ (say of length $p$), it follows from Lemma \ref{lemcycle}
  that  $x_C$ has a Gram realization in $\oR^2$ if and only if
  $\sum_{i=1}^p\epsilon_ia_i=2k\pi$ for some $\epsilon\in\{\pm 1\}^p$ and $k\in \oZ$ with $|k|\le p/2$.
  Let $\HH_C$ denote the union of the hyperplanes in $\oR^{E(C)}$ defined by these equations.
  Therefore,  $x\not\in \DD^*$
   if and only if $a \in \cup_C \HH_C$, where the union is taken over all circuits $C$ of $G$.
    Clearly we can find a sequence  $a^{(i)} \in [0,\pi]^E \setminus \cup_C \HH_C$ converging to $a$ as $i\rightarrow \infty$. 
    Then the sequence $x^{(i)}:=\cos a^{(i)}$   tends to $x$ as $i\rightarrow \infty$ and, for all $i$ large enough, $x^{(i)}\in \DD^*$.
    This shows that $\DD^*$ is a dense subset of $\DD$ and thus of $\EE(G)$.
    \qed\end{proof}

    \begin{corollary}\label{lemgeneric}
    For any graph $G=([n],E)$, $\gd(G)=\max \gd(G,x)$, where the maximum is taken over all $x\in\EE(G)$ that admit a positive definite completion in $\EE_n$ and whose  restriction to any circuit of $G$ has no Gram representation in the plane.
    \end{corollary}
    }

\medskip
We now sketch how to model the  `stretching' procedure using semidefinite programming and how to obtain a  `stress matrix' via  semidefinite programming duality.

Let $G=(V=[n],E)$ be a graph and let $e_0=(i_0,j_0)$ be a non-edge of $G$ (i.e., $i_0\ne j_0$ and $e_0\not\in E$).
Let $a\in \SSS_{++}(G)$ be a partial  positive semidefinite matrix for which we want  to show the existence of a Gram representation  in a small dimensional space. 
For this consider  the semidefinite program:
\begin{equation}\label{ex:SDPP}
\max \ \langle E_{i_0j_0},X\rangle\ \ \text{\rm such that  } \langle E_{ij},X\rangle =a_{ij} \ (ij\in V\cup E),\ 
\ X\succeq 0,
\end{equation}
where $E_{ij}=(e_ie_j^T+e_je_i^T)/2$ and  $e_1,\ldots,e_n$ are the standard unit vectors in $\oR^n$.
The     dual semidefinite program  of (\ref{ex:SDPP})  reads:
\begin{equation}\label{ex:SDPD}
\min  \sum_{ij\in V\cup E} w_{ij} a_{ij} 
\text{ such that } \Omega= \sum_{ij\in V\cup E} w_{ij} E_{ij}-E_{i_0j_0}\succeq 0.
\end{equation}
As the program (\ref{ex:SDPD}) is  strictly feasible,
 there is no duality gap and the optimal values are attained in both programs. Consider now a pair $(X,\Omega)$ of primal-dual optimal solutions. Then $(X,\Omega)$ satisfies the optimality condition, i.e., $X\Omega=0$. This condition can be reformulated as
\begin{equation}\label{relequil}
w_{ii}\p_i+\sum_{j|ij \in E \cup \{e_0\}} w_{ij}\p_j=0\ \text{for all } i\in [n],
\end{equation}
where $\Omega=(w_{ij})$ and $X={\rm Gram}(p_1,\ldots,p_n)$. Using the local information provided by the
 `equilibrium' conditions (\ref{relequil}) about the configuration $\pb$   and examining all possible cases for the support of the stress matrix, one can construct an  equivalent configurations in $\oR^4$ for the graphs $V_8$ and $C_5 \times C_2$. 
 
 For the full proof the reader is referred to the extended version of the paper~\cite{LV12}.

\section{Links to Euclidean  graph realizations}\label{seclinks}
In this section we investigate the links between  the notion of Gram dimension and graph realizations in Euclidean spaces which will, in particular, enable us to relate our result from Theorem \ref{theomain} to the result of Belk and Connelly (Theorem  \ref{theoBC}).

Recall that a matrix $D=(d_{ij})\in \SSS^n$ is a {\em Euclidean distance matrix} (EDM) if there exist vectors $p_1,\ldots,p_n\in \oR^k$ (for some $k\ge 1$) such that $d_{ij}=\|p_i-p_j\|^2$ for all $i,j\in [n]$.
Then  $\EDM_n$ denotes the  cone of all $n \times n$ Euclidean distance matrices and, for a graph $G=([n],E)$, $
\EDM(G)=\pi_E(\EDM_n)$ is  the set of partial $G$-matrices that can be completed to a Euclidean distance matrix.

\begin{definition}\label{defedm}
Given a graph $G=([n],E)$ and  $d\in \oR_{+}^{E}$,  a Euclidean (distance) representation of $d$ in $\oR^k$
consists of  a set of vectors $\p_1,\ldots,\p_n\in \oR^k$  such that $$\| \p_i-\p_j\|^2=d_{ij}\  \forall  ij \in E.$$
Then, $\fedm(G,d)$ is the smallest  integer $k$ for which $d$ has a  Euclidean representation in $\oR^k$ and the graph parameter $\fedm(G)$ is defined as 
\begin{equation}\label{edmdimdef}
\fedm(G)=\underset{d \in \EDM(G)}{\max} \fedm(G,d).
\end{equation}
\end{definition}

There is a well known correspondence between psd and EDM completions (for  details and references see, e.g.,  \cite{DL97}).
Namely, for a graph  $G$,  let $\nabla G$ denote its {\em suspension graph},   obtained by adding a new node (the {\em apex} node, denoted by 0), adjacent to all nodes of $G$.
Consider the one-to-one  map $\phi:  \oR^{V \cup E(G)} \mapsto \oR_{+}^{E(\nabla G)}$,  which maps  $x\in \oR^{V \cup E(G)}$ to
 $d=\phi(x )\in \oR_{+}^{E(\nabla G)}$ defined  by
$$ d_{0i}= x_{ii}  \ (i\in [n]),\ \ \ d_{ij}=x_{ii}+x_{jj}-2x_{ij} \ (ij\in E(G)).$$
  Then the vectors $u_1,\ldots,u_n\in\oR^k$ form a Gram representation of $x$ if and only if the vectors  $u_0=0,u_1,\ldots,u_n$ form a Euclidean representation of $d=\phi(x)$ in $\oR^k$. This shows:

\begin{lemma}\label{covariance}
Let $G=(V,E)$ be a graph. Then,  $\gd(G,x)=\fedm(\nabla G,\phi(x))$ for any $x\in \oR^{V\cup E}$ and thus 
 $\gd(G)=\fedm(\nabla G)$.
 \end{lemma}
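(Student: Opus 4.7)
The plan is to upgrade the informal bijection stated in the paragraph preceding the lemma into a clean two-way proof of the pointwise identity $\gd(G,x)=\fedm(\nabla G,\phi(x))$ and then pass to the maximum. The single algebraic identity doing all the work is polarization, $\|u-v\|^2=\|u\|^2+\|v\|^2-2u^Tv$: anchoring the apex node at the origin converts the inner products specified by $x$ on $V\cup E(G)$ exactly into the squared distances specified by $\phi(x)$ on $E(\nabla G)$.

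For the inequality $\fedm(\nabla G,\phi(x))\le \gd(G,x)$, I would take an optimal Gram representation $u_1,\ldots,u_n\in\oR^k$ of $x$ with $k=\gd(G,x)$, append $u_0:=0$, and directly verify that $\|u_0-u_i\|^2=u_i^Tu_i=x_{ii}=\phi(x)_{0i}$ on the apex edges and $\|u_i-u_j\|^2=x_{ii}+x_{jj}-2x_{ij}=\phi(x)_{ij}$ on the edges of $G$. For the reverse inequality, I would start from an optimal Euclidean representation $v_0,v_1,\ldots,v_n\in\oR^k$ of $\phi(x)$, translate the configuration by $-v_0$ (which preserves squared pairwise distances and keeps the ambient dimension equal to $k$) to assume $v_0=0$, and then invert the same polarization identity to recover $v_i^Tv_j=x_{ij}$ on every index in $V\cup E$.

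To derive $\gd(G)=\fedm(\nabla G)$ from the pointwise identity, I would remark that the two directions above also show that $\phi$ restricts to a bijection between $\PSD(G)$ and $\EDM(\nabla G)$: the first direction sends any Gram-completable $x$ to an EDM-completable $\phi(x)$, while the second direction, applied to an arbitrary $d\in\EDM(\nabla G)$, realizes $d$ as $\phi(x)$ for some $x\in\PSD(G)$. Taking the supremum of the pointwise equality over $x\in\PSD(G)$ (equivalently, over $\phi(x)\in\EDM(\nabla G)$) then delivers the graph-level identity. The only point that deserves a sentence of care is verifying that the translation by $-v_0$ in the reverse direction does not inflate the ambient dimension, which is immediate since translations live in the same $\oR^k$; beyond that the argument is essentially bookkeeping and there is no genuine obstacle.
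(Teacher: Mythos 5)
Your proof is correct and follows essentially the same route as the paper, which derives the lemma from the observation that $u_1,\ldots,u_n$ is a Gram representation of $x$ exactly when $u_0=0,u_1,\ldots,u_n$ is a Euclidean representation of $\phi(x)$; your translation by $-v_0$ and the surjectivity of $\phi$ onto $\EDM(\nabla G)$ just make explicit what the paper leaves implicit.
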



 For the Gram dimension of a graph one can show the following property:

 \begin{lemma}\label{lem1}
 Consider a graph $G=([n],E)$ and let  $\nabla G=([n]\cup\{0\},E\cup F)$, where $F=\{(0,i)\mid i\in [n]\}$. Given $x\in \oR^E$, its {\em $0$-extension} is the vector $y=(x,0)\in\oR^{E\cup F}$. If $x\in \SSS_+(G)$, then $y\in \SSS_+(\nabla G)$ and
 $\gd(G,x)=\gd(\nabla G, y)$. Moreover, 
 $\gd(\nabla G)= \gd(G)+1$.
 \end{lemma}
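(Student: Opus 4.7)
The plan is to handle the three assertions in sequence.

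For $y\in \SSS_+(\nabla G)$ and $\gd(G,x)=\gd(\nabla G,y)$, the key point is that the $0$-extension sets $y_{00}=0$, forcing the apex vector in any Gram representation of $y$ to be the zero vector. Concretely, if $p_1,\ldots,p_n\in \oR^k$ realize $x$, then setting $p_0:=0$ realizes $y$, giving both membership and $\gd(\nabla G,y)\le \gd(G,x)$; conversely, any Gram representation $q_0,\ldots,q_n$ of $y$ satisfies $\|q_0\|^2=y_{00}=0$, so $q_0=0$ and $q_1,\ldots,q_n$ realize $x$ in the same dimension.

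For the lower bound $\gd(\nabla G)\ge \gd(G)+1$, I would pick $x\in \SSS_+(G)$ achieving $\gd(G,x)=\gd(G)$ and consider the point $z\in \SSS_+(\nabla G)$ that agrees with $x$ on $V\cup E$ and has $z_{00}=1$, $z_{0i}=0$ for all $i\in[n]$ (membership is witnessed by adjoining a fresh unit vector in a new coordinate to any Gram representation of $x$). In any Gram representation of $z$ the apex vector must be a unit vector orthogonal to all the other $p_i$, so those $p_i$ realize $x$ in one fewer dimension, forcing $\gd(\nabla G,z)\ge \gd(G,x)+1=\gd(G)+1$.

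For the upper bound $\gd(\nabla G)\le \gd(G)+1$, I would take an arbitrary $z\in \SSS_+(\nabla G)$ and split on $z_{00}$. If $z_{00}=0$, then $z$ is a $0$-extension of its restriction $x:=z|_{V\cup E}\in \SSS_+(G)$ and the first paragraph already yields $\gd(\nabla G,z)=\gd(G,x)\le \gd(G)$. If $z_{00}>0$, I would perform a Schur-complement-style reduction: starting from any Gram representation $(p_0,\ldots,p_n)$ of $z$, project each $p_i$ orthogonally onto $p_0^\perp$; the projected vectors have pairwise inner products $z_{ij}-z_{0i}z_{0j}/z_{00}$ on $V\cup E$, which defines a point of $\SSS_+(G)$. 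Realizing that point by vectors $v_1,\ldots,v_n\in \oR^{\gd(G)}$ and then reinserting the apex in a fresh orthogonal direction $e$ by setting $w_0:=\sqrt{z_{00}}\,e$ and $w_i:=v_i+(z_{0i}/\sqrt{z_{00}})\,e$ yields, after a direct check, a Gram representation of $z$ in $\oR^{\gd(G)+1}$. The main obstacle is precisely this $z_{00}>0$ case: one must verify that projecting out the apex genuinely converts the completion problem on $\nabla G$ into one on $G$ so that the bound $\gd(G)$ applies, and that reinserting the apex costs exactly one additional dimension.
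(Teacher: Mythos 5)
Your proof of the main claim $\gd(\nabla G)=\gd(G)+1$ is correct and, for the upper bound, essentially identical to the paper's argument: your $z_{00}>0$ case is the paper's Schur-complement step in Gram-vector language (projecting the $p_i$ onto $p_0^\perp$ produces Gram vectors of $Y=A-aa^T/\alpha$, and your reinsertion of the apex along a fresh direction rebuilds exactly the paper's low-rank matrix $X'$), and your $z_{00}=0$ case matches as well, though you should state explicitly that $z_{00}=0$ forces $z_{0i}=0$ for all $i$ by psd-completability, as the paper does. Where you diverge is in reading the first assertion: by the paper's convention all diagonal entries, including the apex's, are implicitly equal to $1$ (see the remark following the definition of $\gd$), so the ``$0$'' in the $0$-extension $y=(x,0)\in\oR^{E\cup F}$ refers only to the entries on the new edges $F$, not to $y_{00}$. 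Under that reading the apex vector must be a unit vector orthogonal to all the $p_i$, and the first claim should be $\gd(\nabla G,y)=\gd(G,x)+1$ (the printed equality is off by one); this is why the paper can say the first part ``implies $\gd(\nabla G)\ge\gd(G)+1$'', and it is the form actually used in the hardness reduction of Section~\ref{sec:complexity}, where the zero vector on $E(\nabla^{k-3}H)$ (with all diagonals $1$) is an iterated $0$-extension. Your reading with $y_{00}=0$, which forces the apex vector to be zero and gives the equality $\gd(\nabla G,y)=\gd(G,x)$, matches the statement as literally printed but then cannot by itself deliver the lower bound; you correctly compensate with the separate witness $z_{00}=1$, $z_{0i}=0$, which is precisely the intended argument, so no mathematical content is missing from your proof of $\gd(\nabla G)=\gd(G)+1$ --- only be aware that the version of the first part you prove is not the one the paper later relies on.
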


 \begin{proof}
 The first part is clear and implies $\gd(\nabla G)\ge \gd(G)+1$. 
 Set $k=\gd(G)$; we  show the reverse inequality $\gd(\nabla G)\le k+1$.
 For this, let $X\in \SSS^{n+1}_+$, written in block-form as $X=\left(\begin{matrix} \alpha & a^T \cr a & A\end{matrix}\right)$,
 where $A\in \psd$ and the first row/column is indexed by the apex node 0 of $\nabla G$.
 If $\alpha =0$ then $a=0$, $\pi_{VE}(A)$ has a Gram representation in $\oR^r$  and thus $\pi_{ V(\nabla G) E(\nabla G)}(X)$ too.
 Assume now $\alpha > 0$ and without loss of generality $\alpha =1$.
 Consider the Schur complement $Y$ of $X$ with respect to the entry $\alpha=1$, given by
  $Y=A-aa^T$.
  As $\gd(G)=k$,  there exists $Z\in \psd$ such that $\text{rank}(Z) \le k$ and $\pi_{V E}(Z)=\pi_{V E}(Y)$.
  Define the matrix $$X':= \left(\begin{matrix} 1 & a^T \cr a & aa^T\end{matrix}\right)+\left(\begin{matrix} 0 & 0 \cr 0 & Z\end{matrix}\right).$$
  Then, ${\rm rank}( X') ={\rm rank} ( Z)+1\le k+1$. Moreover, $X'$ and $X$ coincide at all diagonal entries as well as at all entries corresponding to edges of $\nabla G$. This concludes the proof that $\gd(\nabla G)\le k+1$.
  \qed\end{proof}

  We do not know whether the analogous property is true for the graph parameter $\fedm(G)$. On the other hand, one can prove the following partial result, whose proof was communicated to us by A. Schrijver.

  \begin{theorem}\label{lemedmdim}
  For a graph $G$,
  $\fedm(\nabla G)\ge \fedm(G)+1$.
  \end{theorem}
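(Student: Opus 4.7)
The plan is to exhibit, for any graph $G$ with $\fedm(G)=:k$, an explicit $d\in \EDM(\nabla G)$ satisfying $\fedm(\nabla G,d)\ge k+1$. Fix $d'\in \EDM(G)$ attaining $\fedm(G,d')=k$ together with a Euclidean realization $p_1,\ldots,p_n\in \oR^k$ of $d'$ whose affine span is all of $\oR^k$; such a realization exists by minimality of $k$. Lift by setting $\tilde p_i=(p_i,0)\in \oR^{k+1}$ and $p_0=(0,\ldots,0,h)\in \oR^{k+1}$ for some $h>0$, and define $d\in \EDM(\nabla G)$ by $d_{ij}=d'_{ij}$ for $ij\in E$ and $d_{0i}:=\|\tilde p_i-p_0\|^2=\|p_i\|^2+h^2$. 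This explicit $\oR^{k+1}$-realization immediately yields $\fedm(\nabla G,d)\le k+1$, and the game is to show that the lower bound matches.

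For the lower bound I would argue by contradiction: suppose $d$ is realized in $\oR^k$ by vectors $q_0,\ldots,q_n$. Translating so that $q_0=0$, the restriction $q_1,\ldots,q_n$ is a Euclidean realization of $d'$ in $\oR^k$ satisfying $\|q_i\|^2=\|p_i\|^2+h^2$ for every $i$. Since $\fedm(G,d')=k$, the affine span of $q_1,\ldots,q_n$ must equal $\oR^k$ and therefore contains the origin; this gives an affine relation $\sum_i\lambda_iq_i=0$ with $\sum_i\lambda_i=1$, which via polarization is equivalent to $\sum_i\lambda_i\|q_i\|^2=\tfrac{1}{2}\sum_{i,j}\lambda_i\lambda_j\|q_i-q_j\|^2$, coupling the norms $\|q_i\|^2$ to the edge distances $d'_{ij}$ together with some unspecified non-edge distances. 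When $d'$ is globally rigid in $\oR^k$, every such $q$ is a rigid motion $q_i=Rp_i+t$ of $p$, and substituting into the norm equation gives $2(Rp_i)\cdot t+\|t\|^2=h^2$ independently of $i$; since the differences $p_i-p_j$ span $\oR^k$, this forces $t=0$ and hence $h=0$, contradicting $h>0$.

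The main obstacle is the general case, where $d'$ need not be globally rigid and hence can admit realizations $q_i$ that are not rigid motions of $p_i$. To close this gap, I would either (i) replace $d'$ by a nearby globally rigid $d''\in \EDM(G)$ with $\fedm(G,d'')$ still equal to $k$ using a density/perturbation argument, or (ii) analyze the semi-algebraic moduli space of Euclidean realizations of $d'$ in $\oR^k$ modulo rigid motions and argue by a dimension count that its image under the norm map $q\mapsto (\|q_i\|^2)_i$ is a proper semi-algebraic subset of $\oR^n$, so that the one-parameter family $h\mapsto (\|p_i\|^2+h^2)_i$ cannot lie in it for all $h>0$. Either resolution supplies an $h>0$ for which the constructed $d$ admits no $\oR^k$-realization, and thus witnesses $\fedm(\nabla G,d)\ge k+1=\fedm(G)+1$.
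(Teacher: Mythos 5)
Your overall strategy---lift $d'$ by an apex placed at height $h$ above a realization $p$ and show that for some $h$ the lifted metric cannot live in $\oR^k$---is the same parametrized-apex idea the paper uses, but your proposal does not actually close the case where $d'$ is not globally rigid, and neither of your two suggested repairs works. Route (i) fails outright: a graph with $\fedm(G)=k$ need not admit \emph{any} globally rigid full-dimensional realization in $\oR^k$, let alone one with $\fedm(G,d'')=k$. For example, every cycle $C_n$ ($n\ge 4$) has $\fedm(C_n)=2$, yet every planar realization of a cycle with full affine span is flexible, hence never globally rigid; so there is no ``nearby globally rigid $d''$'' to perturb to. Route (ii) is also insufficient as stated: knowing that the image of the norm map $q\mapsto(\|q_i\|^2)_i$ on the realization variety is a proper (lower-dimensional) semi-algebraic subset of $\oR^n$ does not prevent it from containing the particular one-parameter ray $\{(\|p_i\|^2+h^2)_i : h>0\}$, which is only one-dimensional; a codimension count says nothing about a specific curve. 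What is genuinely needed---and what you are missing---is an argument controlling the hypothetical $\oR^k$-realizations as $h\to\infty$.

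This is exactly how the paper's proof (due to Schrijver) proceeds. Assuming (toward the contrapositive) that the apex-lifted metrics $\widehat d(t)$ are realizable in $\oR^k$ for \emph{all} heights $t$, one normalizes each realization so that the apex sits at $(0,t)$ and a fixed base vertex at the origin, uses connectedness of $G$ to bound $\|q_i(t)\|$ uniformly in $t$ (edge lengths are fixed, so walk along paths from the base vertex), extracts a convergent subsequence $t_m\to\infty$, and observes that the apex-distance equations $\|p_i\|^2+t^2=\|u_i(t)\|^2+(a_i(t)-t)^2$ force the last coordinates $a_i(t_m)$ to tend to $0$; the limit then yields a realization of $d'$ in $\oR^{k-1}$, contradicting $\fedm(G,d')=k$. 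So the missing ingredient in your write-up is precisely this compactness/limiting step (together with the connectivity hypothesis that makes it work); your global-rigidity computation handles only a special case that, as the cycle example shows, can be vacuous.
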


  \begin{proof}
  Set $\fedm(\nabla G)=k$; we   show  $\fedm(G)\le k-1.$  We may assume that $G$ is connected (else deal with each connected component separately).
  Let  $d \in \EDM(G)$ and let $p_1=0,p_2, \ldots, p_n$ be a Euclidean representation of $d$ in $\oR^m$ ($m\ge 1$).
  Extend the $p_i$'s  to vectors $\widehat{p_i}=(p_i,0)\in \oR^{m+1}$  by appending an extra  coordinate equal to zero, 
   and set $\widehat{p}_0(t)=(0,t)\in \oR^{m+1}$ where $t$ is any positive real  scalar.
    Now consider  the distance   $\widehat{d}(t) \in \EDM(\nabla G)$ with Euclidean representation $\widehat{p_0}(t), \widehat{p_1},\ldots,\widehat{p_n}$.

       As $\fedm(\nabla G)=k$, there exists another Euclidean representation of $\widehat{d}(t)$  by  vectors $q_0(t), q_1(t),\ldots,q_n(t)$ lying  in  $\oR^k$.
         Without loss of generality, we can assume that
	   $q_0(t)=\widehat{p_0}(t)=(0,t)$ and $q_1(t)$ is the zero vector; 
	       for $i\in[n]$, write $q_i(t)=(u_i(t),a_i(t))$, where $u_i(t)\in \oR^{k-1}$ and $a_i(t) \in \oR$.
	           Then $\|q_i(t)\| = \|\widehat{p_i}\|=\|p_i\|$ whenever node $i$ is adjacent to node 1 in $G$.
		        As the graph $G$  is connected,  this implies that, for any $i\in [n]$,
			      the scalars   $\|q_i(t)\|$ ($t \in \oR_+$)  are bounded. Therefore    there exists a  sequence  $t_m \in \oR_+$ ($m\in\oN$)  converging to $+\infty$     and for which the sequence $(q_i(t_m))_m$ has a limit. Say  $q_i(t_m)=(a_i(t_m),u_i(t_m))$ converges to  $(u_i,a_i)\in \oR^k$ as $m \rightarrow +\infty$,  where $u_i\in\oR^{k-1}$ and $a_i\in\oR$.
				      The condition $\|q_0(t)-q_i(t)\|^2=\widehat{d}(t)_{0i}$ implies  that  $\|p_i\|^2+t^2=\|u_i(t)\|^2+(a_i(t)-t)^2$ and thus
				      $$  a_i(t_m)=\frac{a_i^2(t_m)+\|u_i(t_m)\|^2-\|p_i\|^2}{2t_m}  \hspace{0.2cm} \forall m\in \oN.$$
				       Taking the limit as $m \to \infty $ we obtain  that $\underset{m \to \infty }{\lim} a_i(t_m)=0$ and thus  $a_i=0$.
				        Then, for $i,j\in [n]$, $d_{ij}=\widehat{d}(t_m)_{ij}=\|(a_i(t_m),u_i(t_m))-(a_j(t_m),u_j(t_m))\|^2$ and taking the limit as $m \to +\infty$ we obtain that  $d_{ij}=\|u_i-u_j\|^2$.
					 This shows that  the vectors $u_1,\ldots,u_n$ form a Euclidean representation of $d$ in $\oR^{k-1}$.
					 \qed\end{proof}

This raises the following question:
Is it true that
$\fedm(\nabla G)\le \fedm(G)+1$?
A positive answer  would imply that our characterization for the graphs with Gram dimension 4 (Theorem \ref{theomain}) is equivalent to the characterization of Belk and Connelly for 
the graphs having a Euclidean representation in $\oR^3$ (Theorem \ref{theoBC}).
In any case,  we have that: 
\begin{equation}\label{in:gd>ed}
\gd(G)=\fedm(\nabla G)\ge \fedm(G)+1.
\end{equation}
In the full version of the paper it is  proven that  $\gd(V_8)=\gd(C_5\times C_2)=4$~\cite{LV12}. This fact combined with~(\ref{in:gd>ed}) implies that $\fedm(V_8)=\fedm(C_2\times C_5)=3$, which was the main part  in the  proof of Belk \cite{Belk} to characterize graphs with $\fedm(G)\le 3$. 

\section{Some complexity  results}\label{sec:complexity}

Consider the natural  decision problem associated with the graph parameter $\gd(G)$: 
Given a   graph $G$ and a rational vector $x \in \EE(G)$,
determine whether $\gd(G,x)\le k$, where $k\ge 1$ is some  fixed  integer.
In this section we show  that this is a hard problem for any $k\ge 3$, already when $x$ is the zero vector. 
Further results concerning complexity issues associated with the graph parameter $\gd(G)$ are discussed   in~\cite{NLV12}.

Recall that an orthogonal representation of dimension $k$ of $G=([n],E)$   is a set of nonzero vectors $v_1,\ldots,v_n\in \oR^k$ such that $v_i^Tv_j=0$ for all pairs $ij\not \in E.$
Clearly, the minimum dimension of an orthogonal representation of the complementary graph $\overline{G}$ coincides with 
     $\gd(\oG,0)$; this graph parameter is called the  {\em orthogonality dimension} of $G$,
  also denoted by $\xi(G)$.
  Note that  it  satisfies the inequalities 
$ \omega (G)\le \xi(G) \le \chi( G)$,    where $\omega(G)$ and $\chi(G)$ are the clique and chromatic numbers of $G$ (see
\cite{Lo79}). 

    One can easily verify that, for $k=1,2$,
    $\xi(G) \le k $ if and only if $\chi(G)\le k$, which can thus  be  tested in polynomial time.
    On the other hand, for  $k=3$, Peeters \cite{Pe96} gives a polynomial time reduction of the problem of testing $\xi(G)\le 3$ to the problem of testing $\chi(G)\le 3$; moreover this reduction preserves graph planarity. As a consequence, it is NP-hard to check whether $\gd(G,0) \le 3$, already  for the class of planar graphs.

	   This hardness result for the zero vector  extends to any $k\ge 3$, using the operation of adding an apex node to a graph.
	   For a graph $G$, 
	   $\nabla ^kG$ is the new graph obtained by adding  iteratively $k$ apex nodes to $G$.


  \begin{theorem}
	   For any fixed $k\ge 3$, it is NP-hard to decide whether $\gd(G,0)\le k$, already for graphs $G$ of the form $G=\nabla ^{k-3} H$ where $H$ is planar.
	   \end{theorem}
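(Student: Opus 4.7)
The plan is to reduce from the $k=3$ case, which is already settled in the discussion preceding the theorem: via Peeters' planarity--preserving reduction from planar $3$-colourability to $\xi(\cdot)\le 3$, combined with the identification of $\xi$ with $\gd(\cdot,0)$ by complementation, deciding $\gd(H,0)\le 3$ is NP-hard on planar inputs $H$. I will lift this hardness to every fixed $k\ge 3$ by a polynomial-time many-one reduction based on the apex operation $\nabla$.

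The central ingredient is the per-vector form of Lemma~\ref{lem1}: for any graph $G$ and any $x\in\SSS_+(G)$ (normalised to have unit diagonal) with $0$-extension $y\in\SSS_+(\nabla G)$, the exact identity
\[\gd(\nabla G,y)\;=\;\gd(G,x)+1\]
holds. The inequality $\ge$ is geometric: in any Gram representation $u_0,u_1,\dots,u_n$ of $y$ by unit vectors, the apex vector $u_0$ is orthogonal to every $u_i$ ($i\in[n]$), so $u_1,\dots,u_n$ all lie in the hyperplane $u_0^{\perp}$ and thus realise $x$ in one dimension fewer. The inequality $\le$ is exactly the Schur-complement construction in the proof of Lemma~\ref{lem1}, which promotes a rank-$r$ psd completion of $x$ to a rank-$(r{+}1)$ psd completion of $y$. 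Iterating this identity $k-3$ times starting from $x=0\in\oR^{E(H)}$ (whose $0$-extension remains the zero vector, now in $\oR^{E(\nabla H)}$) yields, for every planar $H$,
\[\gd\bigl(\nabla^{k-3}H,\,0\bigr)\;=\;\gd(H,0)+(k-3).\]

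The reduction is therefore the map $H\mapsto G:=\nabla^{k-3}H$. For fixed $k$ it adds only $k-3$ vertices, is computable in polynomial time, and produces an output of the prescribed form $\nabla^{k-3}(\text{planar})$. By the displayed equivalence, $G$ is a yes-instance of ``$\gd(G,0)\le k$'' precisely when $H$ is a yes-instance of ``$\gd(H,0)\le 3$'', so the NP-hardness of the latter transfers to the former. The only non-routine step is the per-vector identity, but both directions are essentially already contained in the proof of Lemma~\ref{lem1}; consequently no genuinely new obstacle arises in passing from the base case $k=3$ to arbitrary $k\ge 3$.
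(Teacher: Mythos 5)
Your proof is correct and takes essentially the same route as the paper, which likewise combines Peeters' planarity-preserving hardness of the case $k=3$ with the first part of Lemma~\ref{lem1} applied iteratively to the apex construction $\nabla^{k-3}H$. The per-vector identity you isolate, $\gd(\nabla G,y)=\gd(G,x)+1$ for the $0$-extension $y$ of $x$ (with unit diagonal understood), is exactly the intended content of that first part of Lemma~\ref{lem1}, so your argument adds nothing beyond what the paper's one-line proof invokes.
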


\begin{proof} 
Use the result of Peeters \cite{Pe96} for $k=3$, combined with the first part of Lemma \ref{lem1} for $k\ge 4$.
\qed\end{proof}

Combining with Lemma \ref{covariance} this implies that, for any fixed $k\ge 3$, 
 it is NP-hard to decide whether $\text{\rm ed}(G,d)\le k$, already when $G=\nabla^{k-2}H$ where $H$ is planar and $d\in \{1,2\}^E$.
 In comparison,  Saxe \cite{Sa79} showed NP-hardness for  any $k\ge 1$ and for $d\in \{1,2\}^E$.

\ignore{
In the case $k=2$,  the situation for $x=0$ is clear, since $\gd(G,0)\le 2$ if and only if $G$ is bipartite. For general $x$ and  when $G=C_n$ is a cycle,
   the following result  characterizes the vectors $x\in\EE(C_n)$
   with $\gd(G,x)\le 2$. It will also be useful to bound the Gram dimension (see  Lemma \ref{lemD1} and its proof).

   \begin{lemma}\label{lemcycle}
   Consider the vector $x=(\cos \vartheta_1, \cos \vartheta_2,\ldots,\cos\vartheta_n)\in \oR^{E(C_n)}$, where $\vartheta_1,\ldots,\vartheta_n\in [0,\pi]$.
   Then $(C_n,x)$ admits a Gram representation by unit vectors
   $u_1,\ldots,u_n\in \oR^2$ if and only if there exist
    $\epsilon \in \{\pm 1\}^n$ and $k\in \oZ$ such that
    $\sum_{i=1}^n\epsilon_i \vartheta_i=2k\pi$.
    \end{lemma}

    \begin{proof}
    We prove the `only if' part. Assume that $u_1,\ldots,u_n\in \oR^2$ are unit vectors such that
    $u_i^Tu_{i+1}= \cos \vartheta_{i}$ for all $i\in [n]$ (setting $u_{n+1}=u_1$).
    We may assume that  $u_1=(1,  0)^T$.
    Then, $u_1^Tu_2=\cos \vartheta_1$ implies that $u_2=(\cos (\epsilon_1 \vartheta_1),
    \sin(\epsilon_1 \vartheta_1))^T$ for some $\epsilon_1\in\{\pm 1\}$.
    Analogously, $u_2^Tu_3=\cos \vartheta_2$ implies  $u_3=
    (\cos(\epsilon_1\vartheta_1+\epsilon_2\vartheta_2), \sin(\epsilon_1\vartheta_1+\epsilon_2\vartheta_2))^T$ for some $\epsilon_2\in \{\pm 1\}$.
    Iterating, we find that there exists $\epsilon\in\{\pm 1\}^n$ such that
    $u_{i}=(\cos(\sum_{j=1}^{i-1}\epsilon_i \vartheta_i),  \sin (\sum_{j=1}^{i-1}\epsilon_i \vartheta_i))^T$ for $i=1,\ldots,n$.
    Finally, the condition $u_n^Tu_1=\cos \vartheta_n= \cos (\sum_{i=1}^{n-1} \epsilon_i \vartheta_i)$
     implies $\sum_{i=1}^n\epsilon_i\vartheta_i\in 2\pi\oZ$.
      The arguments can be reversed to show the `if part'.
       \qed\end{proof}

Based on this one can show NP-hardness of  the problem of deciding whether $\gd(C_n,x)\le 2$.
Details will be given in future work.}

\ignore{
  \section{
        Concluding remarks}
        
     The main   contribution of this paper is the proof of the inequality  
 $\gd(C_5 \times C_2)\le~4$  which  underlies  the characterization of graphs with Gram dimension at most four. 
 Although our  proof   goes roughly along  the same lines as the   proof of the inequality $\fedm(C_5 \times C_2) \le 3$ given in \cite{Belk}, there are important differences and we believe  that  our   proof 
     is simpler. 
 This is due  to the fact that we introduce  a number of new  auxiliary lemmas  that enable us to  deal more efficiently with  the case checking.
Furthermore, the use of sdp  to construct a stress matrix permits to eliminate some case checking since, as was  already noted in~\cite{SY06} (in the context of Euclidean realizations), the stress is nonzero along the stretched pair of vertices. 

The next question is how to construct a Gram representation in $\oR^4$ of a given partial matrix  $a\in \SSS_+(G)$ when $\gd(G)\le 4$.
The basic ingredient of  our proof is the {\em existence} of a primal-dual pair of optimal solutions to the programs~(\ref{ex:SDPP}) and (\ref{ex:SDPD}). Under appropriate genericity assumptions, this follows by standard results  of semidefinite programming duality theory. 
In the case of  $C_5\times C_2$, we must make an additional  genericity assumption on   $a\in \SSS_{++}(G)$. 
This is problematic since the folding procedure breaks down for non-generic configurations;  this issue also arises in the case of Euclidean embeddings  although it is not discussed  in the algorithmic approach of \cite{So,SY06}.
Moreover, the above procedure relies on solving several semidefinite programs, which can only be solved  only up to some given precision. 
This excludes the possibility of turning  the proof  into an efficient algorithm for computing exact Gram representations in $\oR^4$.
 
}

\medskip\noindent
{\bf Acknowledgements.} We thank  M. E.-Nagy for useful discussions and A. Schrijver for his suggestions for the proof of  Theorem  \ref{lemedmdim}.

\end{document}